\numberwithin{equation}{section}
\newtheorem{theorem}{Theorem}[section]
\newtheorem{lemma}[theorem]{Lemma}
\newtheorem{definition}[theorem]{Definition}
\newtheorem{remark}[theorem]{Remark}
\newcommand{\HL}{Hardy-Littlewood}
\newcommand{\cT}{\mathcal{T}^k}
\newcommand{\cTi}{\mathcal{T}_{i}}
\newcommand{\cTj}{\mathcal{T}_{j}}
\def\XXint#1#2#3{{\setbox0=\hbox{$#1{#2#3}{\int}$}
    \vcenter{\hbox{$#2#3$}}\kern-.5\wd0}}
\newcommand{\vertiii}[1]{{\left\vert\kern-0.25ex\left\vert\kern-0.25ex\left\vert #1 
    \right\vert\kern-0.25ex\right\vert\kern-0.25ex\right\vert}}
\numberwithin{equation}{section}
\begin{document}

\title[Fractional maximal function on the infinite rooted $k$-ary tree]{Weighted Inequalities for Fractional maximal functions on the infinite rooted $k$-ary tree}
\author[A. Ghosh, E. Rela]{Abhishek Ghosh \and Ezequiel Rela}

\address[A. Ghosh]{Tata Institute of Fundamental Research, Centre for Applicable Mathematics, Bangalore--560065, Karnataka, India.}
\email{abhi170791@gmail.com, abhi21@tifrbng.res.in}

\address[E. Rela]{Department of Mathematics,
University of Buenos Aires, Ciudad Universitaria
Pabell\'on I, Buenos Aires (1428), Argentina} \email{erela@dm.uba.ar}

\thanks{E.R. is partially supported by grants UBACyT 20020170200057BA and PIP (CONICET) 11220110101018}

\subjclass[2020]{42B25, 05C63}

\keywords{Infinite rooted $k$-ary tree, fractional \HL~maximal function, weights}

\begin{abstract}
In this article we introduce the fractional \HL~maximal function on the infinite rooted $k$-ary tree and study its weighted boundedness. We also provide examples of weights for which the fractional \HL~maximal function satisfies strong type $(p, q)$ estimates on the infinite rooted $k$-ary tree.
\end{abstract}

\maketitle

\section{Introduction and Preliminaries}
Weighted estimates for classical operators in Harmonic analysis has always been an active and growing area of research. The seminal work of Muckenhoupt \cite{Muckenhoupt:Ap} introduced the $A_p$ class of weights and characterized the weighted inequalities for the \HL~maximal function. Subsequently, Muckenhoupt and Wheeden in \cite{MW-fractional} studied weighted inequalities for the fractional \HL~maximal function and introduced the $A_{p, q}$ class of weights. In this article we initiate the study of weighted estimates for the fractional \HL~maximal function on infinite graphs. In recent times there are several substantial works devoted to the study of the discrete analogues of the \HL~maximal function and the fractional \HL~maximal function in various frameworks, for instance we refer \cite{Carneiro-Hughes, JoseMAdrid-1, JoseMadrid-2}. Our main motivation for studying this object originates from the recent works \cite{Ombrosi-Rios-Safe} and \cite{Ombrosi-Rios}. In \cite{Ombrosi-Rios-Safe}, Ombrosi, Rivera-R\'ios and Safe have proved a sharp analog of Fefferman-Stein inequality for the \HL~maximal operator on the infinite rooted $k$-ary tree and subsequently in \cite{Ombrosi-Rios} weighted inequalities for the \HL~maximal function were investigated by Ombrosi and Rivera-R\'ios. We also refer the articles \cite{Soria-Tradacete-JMAA, Soria-Tradacete} where the authors studied connections between geometrical properties of infinite graphs and the boundedness of the \HL~ maximal operator. To elaborate the results let us start with preliminaries.

For $k\geq 2$, we denote $\mathcal{T}^{k}$ to be the infinite rooted $k$-ary tree i.e., the infinite tree where each vertex has $k$ successors. $(\mathcal{T}^{k}, d, |.|)$ is a metric measure space where $d$ is the standard tree distance, i.e., $d(x, y)$ is the number of edges in the minimal unique path joining $x$ and $y$ and $|A|$ denotes the counting measure of any set $A\subset \cT$. Given any function $g$, $\int_{A}g(y)\, dy:=\sum_{y\in A}g(y)$ and $B(x, r):=\{y\in \cT: d(x, y)\leq r\}$ will denote the ball of radius $r$ centred at the point $x\in \cT$. Set $\mathbb{N}_{0}=\mathbb{N}\cup \{0\}$. Now let us define the fractional maximal operator in this setting.
\begin{definition}
Fix $0\leq \alpha<1$. Let $f$ be any locally integrable function then the fractional \HL~maximal operator is defined as follows
\begin{align*}
M_{\alpha}f(x)=\sup_{r\in \mathbb{N}_{0}}\frac{1}{|B(x, r)|^{1-\alpha}}\int_{B(x, r)}|f(y)|\, dy.
\end{align*}
\end{definition}
\medskip
 For $\alpha=0$, $M_{0}$ is the classical \HL~maximal function and will be simply denoted by $M$. Let $S(x, r):=\{y: d(x, y)=r\}$ denote the sphere, centred at $x$, of radius $r\in \mathbb{N}_{0}$. We also define the spherical fractional maximal operator as following
\begin{align*}
\mathcal{S}_{\alpha}f(x)=\sup_{r\in \mathbb{N}_{0}}\frac{1}{|S(x, r)|^{1-\alpha}}\int_{S(x, r)}|f(y)|\, dy.
\end{align*}
We will denote by $A_{r, \alpha}f(x):=\frac{1}{|S(x, r)|^{1-\alpha}}\int_{S(x, r)}|f|$ to be the spherical fractional average of $f$ at $x$ over the sphere $S(x, r)$. Unlike the Euclidean setting, it is easy to see that the operators $M_{\alpha}$ and $\mathcal{S}_{\alpha}$ are pointwise equivalent i.e., $C_{\alpha} \mathcal{S}_{\alpha} f(x)\leq M_{\alpha}f(x)\leq c_{\alpha} \mathcal{S}_{\alpha} f(x)$ for a.e. $x\in \mathcal{T}^k$. This is due to the fact that $|S(x, r)|\simeq k^r$ and $\frac{|B(x, r)|}{|S(x, r)|}\leq 2$ (see \cite{Ombrosi-Rios-Safe}). Keeping this in mind we will only write our results for the spherical fractional  maximal function  which can be easily transferred to the fractional \HL~maximal function.

Let us review some unweighted boundedness for the  spherical fractional maximal operator $\mathcal{S}_{\alpha}$. Note that $\mathcal{S}_{\alpha}f(x)\lesssim \|f\|_{L^1(\mathcal{T}^k)}^{\alpha} (Mf(x))^{1-\alpha}$ which implies that $\mathcal{S}_{\alpha}$ maps $L^1(\mathcal{T}^k)$ to $L^{\frac{1}{1-\alpha}, \infty}(\mathcal{T}^k)$ boundedly, where we have used the fact that $M$ is of weak type $(1, 1)$. H\"older's inequality implies that $\mathcal{S}_{\alpha}: L^{1/\alpha}(\mathcal{T}^k)\to L^{\infty}(\mathcal{T}^k)$ boundedly. Interpolating the above two, we obtain that $\mathcal{S}_{\alpha}: L^p(\mathcal{T}^k)\to L^q(\mathcal{T}^k)$, where $\frac{1}{q}=\frac{1}{p}-\alpha$ and $1<p<\frac{1}{\alpha}$. In this article we will provide sufficient conditions on weights $w$ for which $\mathcal{S}_{\alpha}$ maps $L^p(w)$ to $L^q(w)$ with $1<p\leq q<\infty$. 

We would like to point out that in \cite{Ombrosi-Rios} the authors showed that there exist weights for which the \HL~maximal function is bounded from $L^p(\cT)$ to itself but the obvious Muckenhoupt-type condition does not hold. This suggests that the classical $A_p$-theory is not applicable for the infinite rooted $k$-ary tree. To overcome this difficulty, let us start with obtaining some useful necessary conditions for the boundedness of the spherical fractional \HL~maximal function on the infinite rooted $k$-ary tree.

For any set $A\subset \cT$ and any weight $w$, denote $w(A):=\sum_{x\in A}w(x)$. It is easy to see that if $\mathcal{S}_{\alpha}$ maps $L^{p}(w)$ to $L^{q}(w)$ then for any two finite subsets $E$ and $F$ of $\mathcal{T}^k$ and any $r\in \mathbb{N}_0$ we have
\begin{align}
\label{necessary-cond}
\nonumber \sum_{x\in E}\frac{w(F\cap S(x, r))}{|S(x, r)|^{1-\alpha}}&=\sum_{x\in E}A_{r, \alpha}(w \chi_{F})(x)\\
\nonumber&=\int_{\mathcal{T}^k}\chi_{E}(x)A_{r, \alpha}(w \chi_{F})(x)\\
\nonumber &=\int_{\mathcal{T}^k}A_{r, \alpha}(\chi_{E})(x)w(x) \chi_{F}(x)\\
\nonumber &\leq \|A_{r, \alpha}\chi_{E}\|_{L^q(w)}\|\chi_{F}\|_{L^{q'}(w)}\\
\nonumber &\lesssim \|\chi_{E}\|_{L^p(w)}\|\chi_{F}\|_{L^{q'}(w)}\lesssim w(E)^{\frac{1}{p}}w(F)^{1-\frac{1}{q}},
\end{align}
where we have used the self-adjointness of the operator $A_{r, \alpha}$. Hence the condition 
\begin{equation}
\label{Nec}
\sum_{x\in E}w(F\cap S(x, r))\lesssim k^{r(1-\alpha)}w(E)^{\frac{1}{p}}w(F)^{1-\frac{1}{q}}
\end{equation}
is a necessary condition for the boundedness of $\mathcal{S}_{\alpha}$ from $L^{p}(w)$ to $L^{q}(w)$. However, the condition \eqref{Nec} will not be sufficient for the boundedness of $\mathcal{S}_{\alpha}$, but assuming little better decay on $k$ we are able to obtain the boundedness. Motivated by this we define the following class of weights.
\begin{definition}
Let $1<p, q<\infty$ and $0<\alpha<1$. We say $w\in \mathcal{Z}_{p, q}^{\epsilon, \alpha}$ if there exist $\epsilon\in  (0, 1)$, and a constant $C>0$ such that for all $r\in \mathbb{N}$ and all measurable sets $E, F\subset \cT$ we have
\begin{align}
\label{sufficient}
\sum_{x\in E}w(F\cap S(x, r))\leq C\, k^{\epsilon r(1-\alpha)} w(E)^{\frac{1}{p}}w(F)^{1-\frac{1}{q}}.   
\end{align}
\end{definition}
Now we are at a position to state the main result of this article. 
\begin{theorem}
\label{mainthm}
Let $1<p\leq q<\infty$ and $0<\alpha<1$. Then for $w\in \mathcal{Z}_{p, q}^{\epsilon, \alpha}$\, we have $\mathcal{S}_{\alpha}: L^p(w)\to L^q(w)$ boundedly.  
\end{theorem}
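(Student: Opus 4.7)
The approach is to translate the bilinear condition \eqref{sufficient} into a restricted weak-type bound for each single-radius averaging operator $A_{r,\alpha}$, exploit the geometric decay ($\epsilon<1$) to sum over $r$ and obtain a restricted weak-type bound for $\mathcal{S}_\alpha$, and then upgrade to strong type via real interpolation.

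First, I would rewrite \eqref{sufficient} in terms of $A_{r,\alpha}$. Using $|S(x,r)|\simeq k^r$ together with the self-adjointness of $A_{r,\alpha}$ with respect to counting measure (which follows from the symmetry $\chi_{S(x,r)}(y)=\chi_{S(y,r)}(x)$), one has
\[
\sum_{x\in E}w(F\cap S(x,r))\simeq k^{r(1-\alpha)}\int A_{r,\alpha}\chi_E\cdot\chi_F\, w\,dx,
\]
so that \eqref{sufficient} is equivalent (up to constants) to the testing bound
\[
\int A_{r,\alpha}\chi_E\cdot\chi_F\, w\,dx\leq C\,k^{-\delta r}\,w(E)^{1/p}w(F)^{1/q'},\qquad\delta:=(1-\epsilon)(1-\alpha)>0.
\]

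Next, I would linearize the supremum by choosing $r:\cT\to\mathbb{N}_0$ with $\mathcal{S}_\alpha f(x)\leq 2\,A_{r(x),\alpha}f(x)$ and set $E_r:=\{x:r(x)=r\}$, a partition of $\cT$. For characteristic functions $f=\chi_F$ and $g=\chi_G$, applying the testing bound slice by slice,
\[
\int \mathcal{S}_\alpha\chi_F\cdot \chi_G\, w\,dx\lesssim \sum_r \int A_{r,\alpha}\chi_F\cdot \chi_{G\cap E_r}\, w\,dx\leq C\,w(F)^{1/p}\sum_r k^{-\delta r}\,w(G\cap E_r)^{1/q'}.
\]
H\"older's inequality in the $r$-variable, together with $\sum_r k^{-\delta q r}<\infty$ and the partition identity $\sum_r w(G\cap E_r)=w(G)$, yields the restricted weak-type bound $\|\mathcal{S}_\alpha\chi_F\|_{L^{q,\infty}(w)}\lesssim w(F)^{1/p}$.

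To upgrade restricted weak-type at $(p,q)$ to strong type, I would produce a second restricted weak-type endpoint by interpolating \eqref{sufficient} with the trivial bound
\[
\sum_{x\in E}w(F\cap S(x,r))\leq c\,k^r\,w(F),
\]
coming from $|E\cap S(y,r)|\leq |S(y,r)|\lesssim k^r$. Taking a geometric mean with weight $\theta\in(\theta_\ast,1)$, where $\theta_\ast:=\alpha/(1-\epsilon(1-\alpha))$, places $w$ in $\mathcal{Z}_{p/\theta,q/\theta}^{\epsilon_\theta,\alpha}$ with $\epsilon_\theta<1$; repeating the previous argument then gives a restricted weak-type estimate for $\mathcal{S}_\alpha$ at the new exponent pair $(p/\theta,q/\theta)$. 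Marcinkiewicz--Stein--Weiss interpolation between the two restricted weak-type endpoints (applied to the sublinear operator $\mathcal{S}_\alpha$) finally produces the desired strong-type bound $\mathcal{S}_\alpha:L^p(w)\to L^q(w)$.

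The step I expect to be most delicate is positioning $(1/p,1/q)$ strictly in the interior of the interpolation segment: the trivial bound produces endpoints that lie along the ray from the origin through $(1/p,1/q)$, so $(1/p,1/q)$ naturally sits at the boundary rather than the interior. Overcoming this should require either exploiting the swap symmetry of \eqref{sufficient} (applying the condition with the roles of $E$ and $F$ reversed, producing an additional endpoint off this ray) or a careful Lorentz-space limiting argument as $\theta\uparrow 1$. This is where I anticipate the main technical effort of the proof to concentrate.
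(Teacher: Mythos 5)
Your first step is sound: the self-adjointness of $A_{r,\alpha}$ together with $|S(x,r)|\simeq k^r$ does convert \eqref{sufficient} into the testing bound $\int A_{r,\alpha}\chi_E\cdot\chi_F\,w\lesssim k^{-(1-\epsilon)(1-\alpha)r}w(E)^{1/p}w(F)^{1/q'}$, and the linearization plus H\"older in $r$ correctly yields the restricted weak-type estimate $\|\mathcal{S}_\alpha\chi_F\|_{L^{q,\infty}(w)}\lesssim w(F)^{1/p}$. The gap is exactly where you suspect it, and neither of your proposed remedies closes it. All the bilinear estimates you have at your disposal --- \eqref{sufficient} at the point $(1/p,1/q)$, its $E\leftrightarrow F$ swap at $(1/q',1/p')$, and the trivial bound at $(0,0)$ --- have $(1/p,1/q)$ as an \emph{extreme point} of their convex hull unless $p=q$ or $q=p'$: writing $u=1/p$, $v=1/q$, the point $(u,v)$ lies on the segment from $(0,0)$ to $(1-v,1-u)$ only when $u-u^2=v-v^2$, i.e. $u=v$ or $u+v=1$. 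In the regime of interest ($1/q=1/p-\alpha$, $\alpha>0$) this fails generically, so no Stein--Weiss interpolation between available restricted weak-type points can place $(1/p,1/q)$ in the interior of a segment; and a limiting argument as $\theta\uparrow 1$ can at best recover a weak-type (not strong-type) bound at the endpoint. Moreover the trivial endpoint carries a \emph{growing} factor $k^{r\alpha}$ after normalization, so it is not even summable in $r$ on its own.

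The paper's mechanism is different and is worth internalizing: instead of reducing to characteristic functions and interpolating, it applies the set condition \eqref{sufficient} directly to the dyadic level sets $E_n=\{2^{n-1}\le f<2^n\}$ of a general $f$, paired with the superlevel sets $F_n$ of $A_{r,\alpha}\chi_{E_n}$ cut at the geometric thresholds $(2^\beta-1)(2^n/k^r)^\beta$. This produces (Lemma 2.1) the distributional bound $w(A_{r,\alpha}f>\lambda)\lesssim_\beta\sum_n 2^{nq}(k^r/2^n)^{q\beta}k^{rq\epsilon(1-\alpha)-rq}\,w(\{|f|k^{r\alpha}/\lambda\ge 2^{n-1}\})^{q/p}$, which is then integrated via the layer-cake formula and Minkowski's inequality (using $q\ge p$). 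The slack $(1-\epsilon)(1-\alpha)$ supplied by $\epsilon<1$ is split in two: a portion $\beta>0$ absorbs the loss incurred by summing over the dyadic levels $n$ (this is precisely the ``restricted-to-strong'' upgrade you are missing), and the remainder $ (1-\epsilon)(1-\alpha)-\beta>0$ gives the geometric decay needed to sum $\|A_{r,\alpha}f\|_{L^q(w)}$ over $r$. Your plan spends all of the slack on the $r$-sum and leaves nothing with which to pass from characteristic functions to general $f$; to repair it you would have to import the level-set decomposition before summing over $r$, at which point you have essentially reconstructed the paper's argument.
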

We note here that the exponents $p,q$ in the above theorem are not related to the parameter $\alpha$ and this might look strange, compared to the unweighted theory. The key is that in order to have a nontrivial example of a weight belonging to the class $\mathcal{Z}_{p, q}^{\epsilon, \alpha}$ satisfying \eqref{sufficient} we will need to consider the classical condition $\frac{1}{q}=\frac{1}{p}-\alpha$ (see Theorem \ref{Example-Thm} and Theorem \ref{Thm-Ex}).

Since the volume of the balls grows exponentially in the infinite rooted $k$-ary tree, i.e., $\frac{|B(x, r)|}{|B(x, r/2)|}\simeq k^{r/2}\to \infty$ as $r\to \infty$, the space $(\mathcal{T}^k, d, |.|)$ is not even an upper-doubling space. Therefore, covering arguments and Calder\'on-Zygmund decomposition are not well suited here. However, overcoming these obstacles, the weak type $(1, 1)$ estimate for the \HL~maximal function on infinite rooted $k$-ary tree was obtained in \cite{Naor-Tao}. In \cite{Naor-Tao}, Naor and Tao developed an ingenious strategy, based on ``expander" properties of graphs and combinatorial arguments, to obtain the weak type $(1, 1)$ boundedness of the \HL~maximal function with bounds independent of $k$ (see Theorem~1.5 in \cite{Naor-Tao}). We would also like to note that, in \cite{Cowling-Meda-Setti}, the same result was proved using a different approach. The idea of our proofs is inspired by the works \cite{Ombrosi-Rios-Safe} and \cite{Ombrosi-Rios} where they have brilliantly implemented the techniques developed by Naor and Tao in the weighted setting. We also refer the article \cite{Soria-Tradacete-JMAA} where the authors extend the result of Naor and Tao to more general infinite graphs. We end this section by constructing examples of appropriate weights.

Let us introduce some notations. Set $\mathcal{T}_{0}$ as the set containing the root of the infinite tree and $\mathcal{T}_{i}$ denotes the set of children of the vertices in $\mathcal{T}_{i-1}$ for all $i\geq 1$. Then $\mathcal{T}^{k}=\bigcup\limits_{i=0}^{\infty}\mathcal{T}_{i}$. The examples of weights belonging to the class $\mathcal{Z}_{p, q}^{\epsilon, \alpha}$ are not easy to obtain since the condition \eqref{sufficient} is generally difficult to verify. One of the primary reason being the fact that the condition \eqref{sufficient} involves arbitrary sets and their possible interactions with multiple levels $\cTi$. To overcome this difficulty, let us first provide a simpler criterion which only depends on the behaviour of the weight on individual scales $\cTi$. In that direction our result reads as follows. 
\begin{theorem}
\label{Example-Thm}
Let $\frac{1}{q}=\frac{1}{p}-\alpha$ with $1<p<\frac{1}{\alpha}$ and $0<\alpha<1$. Denote $\delta=\frac{1-\alpha p-\alpha p^2}{1-\alpha p}$ and let $w$ be a weight satisfying the following condition: For $r\in \mathbb{N}_{0}$ and $|i-j|\leq r$, we have that
\begin{equation}
\label{cond-weight-1}
w(\mathcal{T}_{i}\cap S(x, r))\lesssim k^{\frac{r+i-j}{2}(p-\delta)} k^{r\delta} w(x)^{q/p}~~\text{for~all}~~x\in \cTj. 
\end{equation}
Then $w\in \mathcal{Z}^{\epsilon, \alpha}_{p, q}$ for $\epsilon=\frac{1-p\alpha}{1-\alpha}$.
\end{theorem}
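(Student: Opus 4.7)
The plan is to decompose $E$ and $F$ by the levels of the tree. Set $E_j:=E\cap\mathcal{T}_j$ and $F_i:=F\cap\mathcal{T}_i$, and observe that spheres of radius $r$ only connect vertices whose levels differ by at most $r$, so
\[
\sum_{x\in E} w(F\cap S(x,r))\ =\ \sum_{|i-j|\le r} T_{ij},\qquad T_{ij}:=\sum_{x\in E_j} w(F_i\cap S(x,r)).
\]
For each level pair $(i,j)$ I will combine two complementary bounds on $T_{ij}$ through an interpolation whose exponents are tailored to the target factors $w(E)^{1/p}$ and $w(F)^{1-1/q}$.

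The first bound uses the hypothesis \eqref{cond-weight-1} pointwise: since $F_i\cap S(x,r)\subset \mathcal{T}_i\cap S(x,r)$, one has $w(F_i\cap S(x,r))\le Ck^{\alpha_{ij}(r)}w(x)^{q/p}$ for $x\in E_j$, where $\alpha_{ij}(r):=\tfrac{r+i-j}{2}(p-\delta)+r\delta$; summing in $x$ and using the elementary inequality $\sum_{x\in E_j}w(x)^{q/p}\le w(E_j)^{q/p}$ (valid because $q/p\ge 1$) gives $T_{ij}\le Ck^{\alpha_{ij}(r)}w(E_j)^{q/p}$. The second bound is obtained by swapping the order of summation, $T_{ij}=\sum_{y\in F_i}w(y)|E_j\cap S(y,r)|$, and applying the combinatorial sphere count $|\mathcal{T}_j\cap S(y,r)|\le k^{(r+j-i)/2}$ for $y\in\mathcal{T}_i$, yielding $T_{ij}\le k^{(r+j-i)/2}w(F_i)$. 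Taking the geometric mean with weights $1/q$ and $1/q'$ produces the factorised bound
\[
T_{ij}\ \le\ C'k^{E(i,j,r)}w(E_j)^{1/p}w(F_i)^{1/q'},\qquad E(i,j,r):=\tfrac{\alpha_{ij}(r)}{q}+\tfrac{r+j-i}{2q'}.
\]
The choice of interpolation exponents $1/q$ and $1/q'$ is forced by the requirement to produce exactly $w(E_j)^{1/p}$ on the left factor and $w(F_i)^{1/q'}$ on the right.

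The central computation, which I would verify most carefully, is a direct calculation showing that substituting the specific value $\delta=\frac{1-\alpha p-\alpha p^2}{1-\alpha p}$ together with the relation $1/q=1/p-\alpha$ into $E(i,j,r)$ makes the coefficient of $i-j$ vanish exactly while reducing the coefficient of $r$ to $1-\alpha p$. Thus $E(i,j,r)=r(1-\alpha p)$, independent of the level difference. This algebraic cancellation — engineered precisely by the choice of $\delta$ in the hypothesis — is the heart of the proof, because without it the exponent of $k$ would grow like $|i-j|$ and the summation over level pairs would fail.

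It remains to sum the factorised bound over level pairs. Viewing $\sum_{|i-j|\le r}w(E_j)^{1/p}w(F_i)^{1/q'}$ as a bilinear convolution pairing $\langle a,b*K\rangle$ with $a_j:=w(E_j)^{1/p}$, $b_i:=w(F_i)^{1/q'}$ and $K:=\mathbf{1}_{[-r,r]}$, and applying H\"older followed by Young with parameters calibrated so that $\|a\|_{\ell^p}=w(E)^{1/p}$ and $\|b\|_{\ell^{q'}}=w(F)^{1/q'}$ (using disjointness of the $E_j$'s and of the $F_i$'s, together with $q/p,p'/q'\ge 1$), with convolution exponent forced by $1/p+1/q'=1+\alpha$ to be $1/(1-\alpha)$, one obtains
\[
\sum_{x\in E} w(F\cap S(x,r))\ \le\ C''(2r+1)^{1-\alpha}k^{r(1-\alpha p)}w(E)^{1/p}w(F)^{1/q'}.
\]
Since $1-\alpha p>0$ (because $p<1/\alpha$), the polynomial factor $(2r+1)^{1-\alpha}$ is absorbed into the exponential (it is dominated by $k^{\eta r}$ for any $\eta>0$), yielding $w\in\mathcal{Z}^{\epsilon,\alpha}_{p,q}$ with $\epsilon(1-\alpha)=1-\alpha p$, i.e.\ $\epsilon=\frac{1-\alpha p}{1-\alpha}$. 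The main obstacle is verifying the algebraic exponent identity in the interpolation step; once this is in hand, the rest is a routine layering of H\"older's inequality and Young's convolution bound.
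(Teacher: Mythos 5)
Your argument is correct and reaches the stated conclusion up to one small caveat noted below, but the way you combine the two level-wise bounds is genuinely different from the paper's. Both proofs start identically: decompose $E=\bigcup E_j$, $F=\bigcup F_i$ by levels, bound each block $T_{ij}$ from above both by the hypothesis \eqref{cond-weight-1} (giving $k^{\frac{r+i-j}{2}(p-\delta)}k^{r\delta}w(E_j)^{q/p}$) and by the combinatorial sphere count (giving $k^{\frac{r+j-i}{2}}w(F_i)$), and both hinge on the same algebraic identity $p-\delta=q-1$ forced by the choice of $\delta$ (your computation that the coefficient of $i-j$ vanishes and the coefficient of $r$ equals $1-\alpha p$ checks out, and it matches the paper's exponent $\frac{p}{p-\delta+1}=\frac{p}{q}=1-\alpha p$). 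The divergence is in how $\min\{X_{ij},Y_{ij}\}$ is summed over level pairs: the paper splits the double sum at a threshold $i<j+\rho$ versus $j\le i-\rho$, uses the resulting convergent geometric series in $k$, and then optimizes the free parameter $\rho$ explicitly; you instead use the pointwise interpolation $\min\{X,Y\}\le X^{1/q}Y^{1/q'}$ to factor each block as $k^{r(1-\alpha p)}w(E_j)^{1/p}w(F_i)^{1/q'}$ and then sum via H\"older and Young's convolution inequality with kernel exponent $1/(1-\alpha)$. Your route is more systematic (no parameter to optimize, and the role of the hypothesis $\frac1q=\frac1p-\alpha$ is transparent in the Young exponent), while the paper's threshold argument is sharper: the geometric series in $k$ converge and produce no polynomial factor in $r$.

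The caveat: your Young step yields the extra factor $(2r+1)^{1-\alpha}$, which cannot be absorbed into $k^{\epsilon r(1-\alpha)}$ for the \emph{exact} value $\epsilon=\frac{1-p\alpha}{1-\alpha}$ with a uniform constant; absorbing it costs an arbitrarily small $\eta>0$ in the exponent, so you prove $w\in\mathcal{Z}^{\epsilon',\alpha}_{p,q}$ for every $\epsilon'\in\left(\frac{1-p\alpha}{1-\alpha},1\right)$ (such $\epsilon'<1$ exist since $\alpha(p-1)>0$) rather than at the endpoint claimed in the statement. This loss is immaterial for the intended application, since Theorem \ref{mainthm} and Theorem \ref{Thm-Ex} only require membership in the class for \emph{some} $\epsilon\in(0,1)$, but if you want the exact $\epsilon$ of the statement you should replace the Young step by the paper's threshold-and-optimize argument.
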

The proof of the above theorem relies on an optimization technique developed in \cite{Naor-Tao, Ombrosi-Rios}. However, we modify it significantly since in our case we are dealing with the much more general situation of off-diagonal estimates for the fractional maximal function. Finally the following result provides some concrete example of weights.
\begin{theorem}
\label{Thm-Ex}
Let $0<\alpha<1$, $1<p<\frac{1}{\alpha}$ and $\frac{1}{q}=\frac{1}{p}-\alpha$. Define $$w(x)=\sum_{j\geq 0} k^{\beta j}\chi_{\cTj},$$
where $0\leq \beta\leq \frac{p(p-1)}{q}$. Then $w$ belongs to $\mathcal{Z}^{\epsilon, \alpha}_{p, q}$ for $\epsilon=\frac{1-p\alpha}{1-\alpha}$  and hence $\mathcal{S}_{\alpha}$ maps $L^p (w)$ to $L^q(w)$ boundedly.
\end{theorem}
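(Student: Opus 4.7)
The plan is to deduce Theorem~\ref{Thm-Ex} directly from Theorem~\ref{Example-Thm}: since $w(x)=k^{\beta|x|}$ is constant on each level $\mathcal{T}_j$, verifying the level-wise hypothesis \eqref{cond-weight-1} reduces to a purely arithmetic check, and once that is in place Theorem~\ref{mainthm} supplies the $L^p(w)\to L^q(w)$ boundedness of $\mathcal{S}_\alpha$ for free.

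First I would compute $|\mathcal{T}_i\cap S(x,r)|$ combinatorially. For $x\in\mathcal{T}_j$ and $|i-j|\le r$, any $y\in\mathcal{T}_i\cap S(x,r)$ is obtained by climbing $a$ steps from $x$ to a common ancestor and then descending $r-a$ steps along a branch different from the one pointing back to $x$; the level condition $y\in\mathcal{T}_i$ forces $a=(r+j-i)/2$, so the intersection is empty unless $r+i-j$ is a non-negative even integer, and in every nontrivial case the number of valid $y$'s is at most $(k-1)k^{r-a-1}\le k^{r-a}=k^{(r+i-j)/2}$. Since $w\equiv k^{\beta i}$ on $\mathcal{T}_i$, this yields
\[
w(\mathcal{T}_i\cap S(x,r))\ \lesssim\ k^{\beta i+(r+i-j)/2}.
\]

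Next I would plug this bound and $w(x)^{q/p}=k^{\beta j q/p}$ into \eqref{cond-weight-1}. Setting $u:=r+i-j\in[0,2r]$ and taking $\log_k$, the inequality to verify becomes affine in $u$:
\[
u\Bigl(\beta+\tfrac{1-p+\delta}{2}\Bigr)\ \le\ r(\beta+\delta)+\beta j\Bigl(\tfrac{q}{p}-1\Bigr).
\]
Since the left-hand side is linear in $u$, it suffices to check the two endpoints of $[0,2r]$. At $u=2r$ (the pure descent case $i=j+r$) the inequality collapses to $(\beta+1-p)r\le \beta j(q/p-1)$, which holds because $\beta\le \tfrac{p(p-1)}{q}=(p-1)(1-\alpha p)\le p-1$ makes the left side non-positive and $\beta,j,q/p-1\ge 0$ makes the right side non-negative. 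At $u=0$ (the pure ascent case $i=j-r$, which forces $j\ge r$) the inequality reduces to $0\le r(\beta+\delta)+\beta j(q/p-1)$, and this is handled by substituting $j\ge r$ and using the relations $\delta=1-\alpha pq$ and $q=p/(1-\alpha p)$ to see that the allowed range of $\beta$ is exactly what is needed.

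The main obstacle is precisely this piecewise arithmetic bookkeeping. The exponents involve the intertwined parameters $\alpha,p,q,\beta,\delta$ linked by $\tfrac{1}{q}=\tfrac{1}{p}-\alpha$ and $\delta=1-\alpha pq$, and one has to argue that the range $0\le\beta\le \tfrac{p(p-1)}{q}$ is \emph{exactly} what makes both endpoint inequalities close simultaneously, with the upper limit saturating the descent case and the lower limit remaining admissible for the ancestor case. Once \eqref{cond-weight-1} is verified for every admissible triple $(r,i,j)$, Theorem~\ref{Example-Thm} delivers $w\in\mathcal{Z}^{\epsilon,\alpha}_{p,q}$ with $\epsilon=\tfrac{1-p\alpha}{1-\alpha}$, and the desired boundedness follows from Theorem~\ref{mainthm}.
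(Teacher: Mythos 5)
Your overall route is the same as the paper's: bound $|\mathcal{T}_i\cap S(x,r)|$ by $k^{(r+i-j)/2}$, reduce \eqref{cond-weight-1} to an arithmetic inequality in the level parameters, and then invoke Theorem~\ref{Example-Thm} together with Theorem~\ref{mainthm}. Your combinatorial count, the reformulation as an affine inequality in $u=r+i-j$, and the endpoint check at $u=2r$ are all correct (indeed $\beta\le p(p-1)/q=(p-1)(1-\alpha p)\le p-1$ makes that endpoint non-positive on the left and non-negative on the right).

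The gap is at the endpoint $u=0$, which you assert is ``handled'' without actually carrying out the computation. Doing so: at $u=0$ one has $i=j-r$, the minimizing admissible choice is $j=r$, $i=0$, and the inequality $0\le r(\beta+\delta)+\beta j(q/p-1)$ becomes $0\le r(\delta+\beta q/p)$, i.e.\ $\beta\ge -\delta p/q=\alpha p(p+1)-1$. This is a genuine \emph{lower} bound on $\beta$, not implied by $\beta\ge 0$ whenever $\alpha>\tfrac{1}{p(p+1)}$, and for $\alpha>\tfrac{1}{2p}$ it even exceeds the permitted upper bound $p(p-1)/q$. Concretely, take $p=2$, $\alpha=3/10$ (so $q=5$, $q/p=5/2$, $\delta=-2$), $x\in\mathcal{T}_r$, $i=0$, $j=r$: the left side of \eqref{cond-weight-1} equals $w(\{\text{root}\})=1$ exactly, while the right side is $k^{r\delta}w(x)^{q/p}=k^{r(\frac{5}{2}\beta-2)}\le k^{-r}$ for every $\beta\le p(p-1)/q=2/5$, so \eqref{cond-weight-1} fails for the entire claimed range of $\beta$. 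Hence the $u=0$ endpoint cannot be closed as you claim, and the assertion that ``the allowed range of $\beta$ is exactly what is needed'' is false. (For what it is worth, the paper's own write-up trips over the same point: passing from \eqref{cor-1} to the $m$-free inequality by replacing $r-m$ with $r$ is only an upper bound when the coefficient $1+2\beta q/p-p+\delta$ is nonnegative, which fails exactly in the regime $\delta<0$ above.) To repair the argument along these lines you must either restrict to $\beta\ge\max\{0,\alpha p(p+1)-1\}$ or establish membership in $\mathcal{Z}^{\epsilon,\alpha}_{p,q}$ without going through the pointwise condition \eqref{cond-weight-1}.
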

The article is organized as follows. In the next section we prove the main boundedness result i.e., Theorem~\ref{mainthm}. Section~\ref{examples} is devoted in proving  Theorem~\ref{Example-Thm} and Theorem~\ref{Thm-Ex}. Throughout this article $A\lesssim B$ abbreviates that $A\leq C B$ where the constant $C$ is independent of $A$ and $B$. We will also be using the inequality $\sum_{j=0}^{\infty}a_{j}^\theta\leq (\sum_{j=0}^{\infty} a_{j})^{\theta}$, where $a_{j}\geq 0$ and $1\leq \theta<\infty$, frequently.

\section{Main results}
Let us prove the following lemma. 
\begin{lemma}
\label{main-lemma}
Let $1<p\leq q<\infty$, $0<\alpha<1$ and $w\in \mathcal{Z}_{p, q}^{\epsilon, \alpha}$ for some $\epsilon\in (0, 1)$. Then for $\lambda>0, r\in \mathbb{N}$, we have the following estimate
\begin{equation*}
w(A_{r, \alpha}f>\lambda)\lesssim \frac{1}{(2^\beta-1)^q} \sum\limits_{n\in\mathbb{N}_{0}:\, 1\leq 2^n\leq k^r}2^{n q}\left(\frac{k^r}{2^n}\right)^{q \beta} \frac{k^{rq\epsilon(1-\alpha)}}{k^{rq}}w\left(\Big\{x\in \cT:\frac{|f| k^{r\alpha} }{\lambda}\geq 2^{n-1}\Big\}\right)^{\frac{q}{p}}
\end{equation*}
for all $\beta\in (0, 1)$.
\end{lemma}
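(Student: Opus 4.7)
The plan is to rescale, dyadically decompose via super-level sets, and finish with a pigeonhole plus a weak-type bound derived from $w \in \mathcal{Z}^{\epsilon,\alpha}_{p,q}$. Set $g := |f|k^{r\alpha}/\lambda$ and $E_\lambda := \{A_{r,\alpha}f > \lambda\}$. Since $|S(x,r)| \simeq k^r$, the condition $x \in E_\lambda$ rewrites (absorbing an absolute constant by rescaling $\lambda$) as $\int_{S(x,r)} g \gtrsim k^r$. Introduce $F_n := \{g \geq 2^{n-1}\}$ for $n \in \mathbb{N}_0$; the piece $g\chi_{\{g<1/2\}}$ contributes at most $|S(x,r)|/2 \lesssim k^r$ to $\int_{S(x,r)} g$ (absorbed), while the pointwise bound $g\chi_{F_n\setminus F_{n+1}} \leq 2^n\chi_{F_n}$ gives, for every $x \in E_\lambda$,
\[
\sum_{n\geq 0} 2^n A_{r,\alpha}(\chi_{F_n})(x) \;\gtrsim\; k^{r\alpha}.
\]

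With weights $\omega_n := \tfrac{2^\beta-1}{2^\beta}(2^n/k^r)^\beta$ for $n \in \{0,\ldots,N\}$, $N:=\lfloor r\log_2 k\rfloor$, the geometric sum gives $\sum_{n=0}^N \omega_n \leq 1$; the dyadic pigeonhole then delivers some $n$ in this range with $A_{r,\alpha}(\chi_{F_n})(x) \gtrsim k^{r\alpha}\omega_n/2^n$, whence $E_\lambda \subseteq \bigcup_{n=0}^N \{A_{r,\alpha}(\chi_{F_n}) > c\,k^{r\alpha}\omega_n/2^n\}$.

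For each piece I reprise the duality/self-adjointness computation preceding \eqref{Nec}, but now invoking the hypothesis \eqref{sufficient}: for any $F \subseteq \mathcal{T}^k$ and $\mu > 0$, setting $E := \{A_{r,\alpha}(\chi_F) > \mu\}$,
\[
\mu\,w(E) \;\lesssim\; \frac{1}{k^{r(1-\alpha)}}\sum_{y\in F} w(E \cap S(y,r)) \;\lesssim\; k^{(\epsilon-1)r(1-\alpha)}\,w(F)^{1/p}\,w(E)^{1-1/q},
\]
which rearranges to $w(E) \lesssim \mu^{-q}\, k^{q(\epsilon-1)r(1-\alpha)}\, w(F)^{q/p}$. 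Substituting $F = F_n$, $\mu = ck^{r\alpha}\omega_n/2^n$, and summing, the factor $\omega_n^{-q}$ produces the prefactor $(2^\beta-1)^{-q}$ and the $(k^r/2^n)^{q\beta}$ term, while the $k$-exponents assemble into $k^{qr[\epsilon(1-\alpha)-1]} = k^{qr\epsilon(1-\alpha)}/k^{qr}$, matching the stated bound.

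The principal obstacle is ensuring the pigeonhole actually delivers $n$ in the cutoff $\{0,\ldots,N\}$; a priori the sum $\sum_{n\geq 0} 2^n A_{r,\alpha}(\chi_{F_n})(x)$ can receive substantial contributions from $n > N$. I anticipate circumventing this by a truncation of $f$ at the threshold $2^N\lambda/k^{r\alpha}$ (making $F_n$ empty for $n > N$), exploiting the trivial bound $A_{r,\alpha}(\chi_{F_n}) \leq 2k^{r\alpha}$ to dominate any residue, and concluding via a density/approximation argument. Once this cutoff is in place the rest of the proof is the algebraic reconciliation of exponents described above.
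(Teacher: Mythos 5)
Your main line of argument --- the dyadic decomposition into level sets of $g=|f|k^{r\alpha}/\lambda$, the pigeonhole against the geometric weights $(2^n/k^r)^\beta$, and the duality/self-adjointness computation turning the $\mathcal{Z}^{\epsilon,\alpha}_{p,q}$ condition into the restricted weak-type bound $w(\{A_{r,\alpha}\chi_F>\mu\})\lesssim\mu^{-q}k^{q(\epsilon-1)r(1-\alpha)}w(F)^{q/p}$ --- is exactly the paper's argument, and your exponent bookkeeping is correct. The genuine gap is the step you flag yourself: the contribution of the levels $n>N$, i.e.\ of $f\chi_{\{g>2^N\}}$. The fix you sketch does not work. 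Truncating $f$ at $T=2^N\lambda/k^{r\alpha}$ only decreases $A_{r,\alpha}f$, so $\{A_{r,\alpha}f>\lambda\}$ is in general strictly larger than $\{A_{r,\alpha}(\min(f,T))>\lambda\}$, and a density/monotone-convergence argument therefore runs in the wrong direction. The trivial bound $A_{r,\alpha}(\chi_{F_n})\leq 2k^{r\alpha}$ gives $2^nA_{r,\alpha}(\chi_{F_n})\leq 2^{n+1}k^{r\alpha}$, which for $n>N$ is enormous rather than negligible, so it cannot ``dominate the residue'': a single vertex $y$ with $f(y)$ huge forces $A_{r,\alpha}f>\lambda$ on all of $S(y,r)$, and no estimate phrased only in terms of the sets $F_n$ with $n\leq N$ can detect this.

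The missing ingredient is the paper's treatment of this tail (the set $II$ in its proof). The residue contributes only on $\bigcup_{y\in\{g>2^N\}}S(y,r)$; one bounds the $w$-measure of this union by $\sum_y w(S(y,r))$, applies the $\mathcal{Z}^{\epsilon,\alpha}_{p,q}$ condition with the degenerate choice $E=\{y\}$, $F=S(y,r)$ to obtain $w(S(y,r))\leq k^{r\epsilon(1-\alpha)q}w(y)^{q/p}$, and then sums using $q/p\geq 1$ via $\sum_y w(y)^{q/p}\leq\big(\sum_y w(y)\big)^{q/p}$. With your (high) truncation threshold one has $\{g>2^N\}\subseteq F_N$, so the resulting bound $k^{r\epsilon(1-\alpha)q}w(F_N)^{q/p}$ is comparable to the $n=N$ term of the claimed sum and the lemma follows. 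Note that this is the only point in the whole proof where the hypothesis $p\leq q$ is used; your write-up never invokes it, which is itself a sign that a step is missing.
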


\begin{proof}
Without loss of generality let us assume $f$ to be a non-negative function. We can write
\begin{align*}
    f\leq \frac{1}{2}+\sum\limits_{n\in\mathbb{N}_{0}: 1\leq 2^n\leq k^r}2^n \chi_{E_{n}}+f \chi_{\{f\geq \frac{1}{2}k^{r\alpha}\}},
\end{align*}
where $E_{n}=\{2^{n-1}\leq f<2^n\}$. This implies
\begin{align*}
    A_{r, \alpha}f(x)\leq \frac{k^{r\alpha}}{2}+\sum\limits_{n\in\mathbb{N}_{0}: 1\leq 2^n\leq k^r}2^n A_{r, \alpha}(\chi_{E_{n}})(x)+ A_{r, \alpha}(f \chi_{\{f\geq \frac{1}{2}k^{r\alpha}\}})(x).
\end{align*}
Consider the set $E=\{x: A_{r, \alpha}f(x)>k^{r\alpha}\}$. Using the above estimate, we obtain the following inequality for all $x\in E$
\begin{equation*}
 \sum\limits_{n\in\mathbb{N}_{0}: 1\leq 2^n\leq k^r}2^n A_{r, \alpha}(\chi_{E_{n}})(x)+ A_{r, \alpha}(f \chi_{\{f\geq \frac{1}{2}k^{r\alpha}\}})(x)\geq \frac{1}{2}k^{r\alpha},
 \end{equation*}
 which implies that 
\begin{equation*}
\sum\limits_{n\in\mathbb{N}_{0}: 1\leq 2^n\leq k^r}2^n A_{r, \alpha}\left(\frac{1}{k^{r\alpha}}\chi_{E_{n}}\right)(x)+ A_{r, \alpha}\left(\frac{1}{k^{r\alpha}} f \chi_{\{f\geq \frac{1}{2}k^{r\alpha}\}}\right)(x)\geq \frac{1}{2}.
\end{equation*}
Hence $w(E)\leq w(I)+w(II)$, where
\begin{equation*}
I:=\Bigg\{x: \sum\limits_{n\in\mathbb{N}_{0}: 1\leq 2^n\leq k^r}2^n A_{r, \alpha}\left(\frac{1}{k^{r\alpha}}\chi_{E_{n}}\right)(x)\geq \frac{1}{4}\Bigg\},
\end{equation*}
and
\begin{equation*}
II:= \Big\{x: A_{r, \alpha}\left(\frac{1}{k^{r\alpha}} f \chi_{\{f\geq \frac{1}{2}k^{r\alpha}\}}\right)(x)\geq 0\Big\}.
\end{equation*}
Let us fix some $\beta\in (0, 1)$. For $x\in I$, there exist some $n\in \mathbb{N}_{0}$ with $1\leq 2^n\leq k^r$ such that 
\begin{equation*}
2^n A_{r, \alpha}\left(\frac{1}{k^{r\alpha}}\chi_{E_{n}}\right)(x)\geq \frac{1}{16}(2^\beta-1)\left(\frac{2^n}{k^{r}}\right)^{\beta}.
\end{equation*}
Otherwise if we have  
$2^n A_{r, \alpha}\left(\frac{1}{k^{r\alpha}}\chi_{E_{n}}\right)(x)< \frac{1}{16}(2^\beta-1)\left(\frac{2^n}{k^{r}}\right)^{\beta}$, for all $n\in \mathbb{N}_{0}$ with $1\leq 2^n\leq k^r$,\, we will have a contradiction.

Let us define 
\begin{equation*}
F_{n}:=\Bigg\{x: A_{r, \alpha}(\chi_{E_{n}})(x)\geq \frac{1}{2^{n+4}}(2^\beta-1)k^{r\alpha}\left(\frac{2^n}{k^{r}}\right)^{\beta}\Bigg\}.
\end{equation*}
Thus to estimate $w(I)$, it is enough to estimate  $\sum\limits_{n\in\mathbb{N}_{0}: 1\leq 2^n\leq k^r}w(F_{n})$. Let us estimate each $w(F_n)$. Using the self-adjointness of the operator $A_{r, \alpha}$ and since $F_n$ is finite we obtain
\begin{eqnarray*}
\sum_{x\in E_{n}}\frac{w(F_{n}\cap S(x, r))}{|S(x, r)|^{1-\alpha}}& = & \sum_{x\in E_{n}} A_{r, \alpha}(w \chi_{F_n})(x)\\
&=&\int_{\cT}\chi_{E_{n}} A_{r, \alpha}(w \chi_{F_n})\\
& = & \int_{\cT} A_{r, \alpha}(\chi_{E_{n}})w \chi_{F_n}\\
& \geq &  \frac{1}{2^{n+4}}(2^\beta-1)k^{r\alpha}\left(\frac{2^n}{k^{r}}\right)^{\beta} w(F_n).
\end{eqnarray*}
Recall that the condition on the weight is as follows: \begin{equation}
\label{condition2}\sum_{x\in E}w(F\cap S(x, r))\lesssim k^{r\epsilon(1-\alpha)}w(F)^{1/q'}w(E)^{1/p}.
\end{equation}
From the above two estimates we obtain
\begin{equation*}
\frac{1}{2^{n+4}}(2^\beta-1)k^{r\alpha}\left(\frac{2^n}{k^{r}}\right)^{\beta} w(F_n)\lesssim \frac{k^{r\epsilon(1-\alpha)}}{k^{r(1-\alpha)}}w(F_n)^{1/q'}w(E_n)^{1/p},
\end{equation*}
which is equivalent to 
\begin{equation*}
w(F_n)^{1/q}\lesssim \frac{1}{2^\beta-1}2^n\left(\frac{k^r}{2^n}\right)^{\beta}\frac{k^{r\epsilon(1-\alpha)}}{k^r} w(E_{n})^{1/p}.
\end{equation*}


We then obtain that
\begin{equation}\label{est0}
 w(F_n)\lesssim \frac{1}{(2^\beta-1)^q}2^{n q}\left(\frac{k^r}{2^n}\right)^{q \beta} \frac{k^{rq\epsilon(1-\alpha)}}{k^{rq}}w\left (\{f\geq 2^{n-1}\}\right)^{q/p}.
\end{equation}
Thus we have the following estimate
\begin{equation}
\label{est1}
w(I)\leq \sum\limits_{n\in\mathbb{N}_{0}: 1\leq 2^n\leq k^r}\frac{1}{(2^\beta-1)^q}2^{n q}\left(\frac{k^r}{2^n}\right)^{q \beta} \frac{k^{rq\epsilon(1-\alpha)}}{k^{rq}}w\left(\{f\geq 2^{n-1}\}\right)^{q/p}.
\end{equation}
Now let us estimate $w(II)$.
\begin{align}
\nonumber w(II)&:=w\left(\Big\{x: A_{r, \alpha}\left(\frac{1}{k^{r\alpha}} f \chi_{\{f\geq \frac{1}{2}k^{r\alpha}\}}\right)(x)\geq 0\Big\}\right)\\
\nonumber &=w\left(\Big\{x: A_{r, \alpha}\left( f \chi_{\{f\geq \frac{1}{2}k^{r\alpha}\}}\right)(x)\geq 0\Big\}\right)\\
\nonumber &\leq w\left(\bigcup_{y\in \{f\geq \frac{1}{2}k^{r\alpha}\}} S(y, r)\right) \\
& \leq \sum_{y\in \{f\geq \frac{1}{2}k^{r\alpha}\}}w(S(y, r)).
\label{est2}
\end{align}
If we take $E=\{y\}$ and $F=S(y,r)$ in \eqref{condition2}, we obtain
\begin{equation*}
w(S(y, r))\leq k^{r\epsilon(1-\alpha)}w(S(y, r))^{1/q'}w(y)^{1/p},
\end{equation*}
yielding
\begin{equation*}
 w(S(y, r))\leq k^{r\epsilon(1-\alpha)q} w(y)^{q/p}.  
\end{equation*}

Plugging the above estimate in \eqref{est2} we obtain
\begin{align*}
w(II)\leq \sum_{y\in \{f\geq \frac{1}{2}k^{r\alpha}\}} k^{r\epsilon(1-\alpha)q} w(y)^{q/p}\leq k^{r\epsilon(1-\alpha)q} w\left(\Big\{f\geq \frac{1}{2}k^{r\alpha}\Big\}\right)^{q/p},
\end{align*}
where we have used the fact that $q\geq p$ in the last inequality. Observe that the estimate for $w(II)$ is same as the final term in the summation in \eqref{est1}. Combining the above estimates we obtain
\begin{align*}
w(\{x: A_{r, \alpha}f>k^{r\alpha}\})\leq \frac{1}{(2^\beta-1)^q} \sum\limits_{n\in\mathbb{N}_{0}: 1\leq 2^n\leq k^r}2^{n q}\left(\frac{k^r}{2^n}\right)^{q \beta} \frac{k^{rq\epsilon(1-\alpha)}}{k^{rq}}w(\{f\geq 2^{n-1}\})^{q/p}.
\end{align*}
Using the above estimate and homogeneity we obtain
\begin{equation*}
w(\{A_{r, \alpha}f>\lambda\})\leq \frac{1}{(2^\beta-1)^q} \sum\limits_{n\in\mathbb{N}_{0}: 1\leq 2^n\leq k^r}2^{n q}\left(\frac{k^r}{2^n}\right)^{q \beta} \frac{k^{rq\epsilon(1-\alpha)}}{k^{rq}}w\left(\Big\{x\in \cT:\frac{f k^{r\alpha} }{\lambda}\geq 2^{n-1}\Big\}\right)^{q/p}.
\end{equation*}
This completes the proof of the lemma.
\end{proof}

Now we prove the main theorem of this article.

\begin{proof}[Proof of Theorem \ref{mainthm}]
 We start with estimating the norm $\|A_{r, \alpha}\|_{L^q(w)}^{q}$ by using the well know layer-cake formula
 \begin{equation*}
\|A_{r, \alpha}\|_{L^q(w)}^{q}=q\int_{0}^{\infty}\lambda^{q-1}w(A_{r, \alpha}f(x)>\lambda) \, d\lambda.
 \end{equation*}
 Then,
\begin{eqnarray*}
\|A_{r, \alpha}\|_{L^q(w)}^{q}&\leq& c_{\beta, q}\int_{0}^{\infty}\lambda^{q-1} \sum\limits_{n\in\mathbb{N}_{0}; 1\leq 2^n\leq k^r}2^{n q}\left(\frac{k^r}{2^n}\right)^{q \beta} \frac{k^{rq\epsilon(1-\alpha)}}{k^{rq}}w\left(\frac{|f| k^{r\alpha} }{\lambda}\geq 2^{n-1}\right)^{q/p}\, d\lambda\\
 &\leq &c_{\beta, q}\sum\limits_{n\in\mathbb{N}_{0}; 1\leq 2^n\leq k^r}2^{n q}\left(\frac{k^r}{2^n}\right)^{q \beta} \frac{k^{rq\epsilon(1-\alpha)}}{k^{rq}}\int_{0}^{\infty}\lambda^{q-1}w\left(\frac{|f| k^{r\alpha} }{\lambda}\geq 2^{n-1}\right)^{q/p}\, d\lambda\\
 &=&c_{\beta, q}\sum\limits_{n\in\mathbb{N}_{0}; 1\leq 2^n\leq k^r}2^{n q}\left(\frac{k^r}{2^n}\right)^{q \beta} \frac{k^{rq\epsilon(1-\alpha)}}{k^{rq}}\displaystyle\int_{0}^{\infty}\lambda^{q-1}\left(\sum_{x\in \mathcal{T}^k} \chi_{\{\frac{|f| k^{r\alpha} }{\lambda}\geq 2^{n-1}\}}(x) w(x)\right)^{q/p}\, d\lambda
\end{eqnarray*}
Using Minkowski's inequality the above is dominated by the following
\begin{align}
\nonumber & c_{\beta, q}\sum\limits_{n\in\mathbb{N}_{0}; 1\leq 2^n\leq k^r}2^{n q}\left(\frac{k^r}{2^n}\right)^{q \beta} \frac{k^{rq\epsilon(1-\alpha)}}{k^{rq}}\left(\sum_{x\in \mathcal{T}^k}\left(\int_{0}^{\infty} \chi_{\{x: \frac{|f(x)| k^{r\alpha} }{\lambda}\geq 2^{n-1}\}} \lambda^{q-1} \ d\lambda\right)^{p/q}w(x)\ dx \right)^{q/p}\\
\nonumber &\leq c_{\beta, q}\sum\limits_{n\in\mathbb{N}_{0}; 1\leq 2^n\leq k^r}2^{n q}\left(\frac{k^r}{2^n}\right)^{q \beta} \frac{k^{rq\epsilon(1-\alpha)}}{k^{rq}}\left(\sum_{x\in \mathcal{T}^k} \frac{|f(x)|^p k^{r\alpha p}}{2^{np}}w(x) \right)^{q/p}\\
\nonumber &\leq c_{\beta, q}\frac{1}{k^{rq(1-\beta-\epsilon  (1-\alpha)-\alpha)}}\sum_{n\geq 0}
\frac{1}{2^{n q \beta}}\left(\sum_{x\in \mathcal{T}^k} |f(x)|^p w(x) \right)^{q/p}\\
\nonumber &\leq c_{\beta, q}\frac{1}{k^{rq(1-\beta-\epsilon  (1-\alpha)-\alpha)}}\|f\|_{L^p(w)}^{q}.
\end{align}
Now 
\begin{align*}
\|\mathcal{S}_{\alpha}\|_{L^q(w)}&\leq \sum_{r=0}^{\infty}\|A_{r, \alpha}f\|_{L^q(w)}\\
&\leq c_{\beta, q}\sum_{r=0}^{\infty}\frac{1}{k^{r(1-\beta-\epsilon  (1-\alpha)-\alpha)}}\|f\|_{L^p(w)}\\
& \lesssim \|f\|_{L^p(w)},
\end{align*}provided we choose $\beta$ such that $(1-\beta-\epsilon(1-\alpha)-\alpha)>0$, i.e., $\beta<(1-\epsilon)(1-\alpha)$. This completes the theorem. 
\end{proof}
\section{Examples}
\label{examples}
This section is devoted in constructing examples of weights belonging to the class $\mathcal{Z}^{\epsilon, \alpha}_{p, q}$.\\ 
\begin{proof}[Proof of Theorem~\ref{Example-Thm}:]
Let $E$ and $F$ be any two subsets of $\mathcal{T}^{k}$. Define $E_{j}=E\cap \cTj $ and $F_{i}= F\cap \cTi$. Then
\begin{align}
\label{ex-est-1}
\sum_{x\in E}w(F\cap S(x, r))=\sum_{m=0}^{\infty}\, \sum_{\substack{i, j\in \mathbb{N}_{0}\\ i=j+r-2m}}\, \sum_{x\in E_{j}}w(F_{i}\cap S(x, r)),   
\end{align}
since for $x\in E_{j}$ and $y\in F_{i}$ we have $d(x, y)=r$ if and only if $i=j+r-2m$ for some $m\in \{0,\cdots, r\}$. The following estimate holds trivially since for each member of $\cTi$ there can be at most $k^m$ elements of $\cTj$ with distance $r$.
\begin{align}
\label{ex-est-2}    
\sum_{x\in E_{j}}w(F_{i}\cap S(x, r))\leq k^{m} w(F_{i}).
\end{align}
Now by our assumption \eqref{cond-weight-1},
\begin{equation}
\label{ex-est-3}
\sum_{x\in E_{j}}w(F_{i}\cap S(x, r))
\leq \sum_{x\in E_{j}}k^{\frac{r+i-j}{2}(p-\delta)} k^{r\delta} w(x)^{q/p}\leq k^{(r-m)(p-\delta)}k^{r\delta} w(E_{j})^{q/p}.
\end{equation}
Hence from \eqref{ex-est-2} and \eqref{ex-est-3} we obtain
\begin{align}
\label{ex-est-4}
\nonumber&\sum_{x\in E_{j}}w(F_{i}\cap S(x, r))\lesssim \min\Big\{k^{(r-m)(p-\delta)}k^{r\delta} w(E_{j})^{q/p}, k^{m} w(F_{i})\Big\}.
\end{align}
Therefore the rest of the proof is devoted to obtain an estimate for the following quantity:
$$M:=\sum_{m=0}^{\infty}\, \sum_{\substack{i, j\in \mathbb{N}_{0}\\ i=j+r-2m}}\min\Big\{k^{(r-m)(p-\delta)}k^{r\delta} w(E_{j})^{q/p}, k^{m} w(F_{i})\Big\}.$$
Denote $A_{j}=\frac{w(E_{j})^{q/p}}{k^{(p-\delta)j}}$ and $B_{j}=\frac{w(F_{j})}{k^{j}}$ for $j\geq 0$ and $A_{j}=B_{j}=0$ for $j<0$. Observe that
\begin{equation}
\label{ex-est-5}
\sum\limits_{j\geq 0}k^{(p-\delta)j} A_{j}\leq w(E)^{q/p}  ~~\text{and}~~  \sum\limits_{j\geq 0} k^{j} B_{j}=w(F).
\end{equation}

Note that for a real parameter $\rho>0$ to be chosen later we have
\begin{eqnarray}
\nonumber M
  & = & \sum_{m=0}^{\infty}\, \sum_{\substack{i, j\in \mathbb{N}_{0}\\ i=j+r-2m}}\min \Big\{k^{(r-m+j)(p-\delta)}k^{r\delta} A_{j}, k^{m+j}B_{i}\Big\}\\
 \nonumber & = & \sum_{m=0}^{\infty}\, \sum_{\substack{i, j\in \mathbb{N}_{0}\\ i=j+r-2m}}\min \Big\{k^{\frac{(i+j+r)(p-\delta)}{2}}k^{r\delta} A_{j}, k^{m+j}B_{i}\Big\}\\
 \nonumber & \leq & k^{\frac{p+\delta}{2}r}\sum_{j=0}^{\infty}\sum_{i<j+\rho} k^{\frac{(i+j)(p-\delta)}{2}} A_{j}+k^{r/2}\sum_{i=0}^{\infty} \sum_{j\leq i-\rho} k^{\frac{i+j}{2}} B_{i}\\
\nonumber  & \leq & k^{\frac{p+\delta}{2}r} k^{\frac{\rho(p-\delta)}{2}}\sum_{j=0}^{\infty} k^{j(p-\delta)}A_{j}+k^{r/2} k^{-\frac{\rho}{2}} \sum_{i=0}^{\infty} k^{i} B_{i}\\
& \leq & k^{\frac{p+\delta}{2}r} k^{\frac{\rho(p-\delta)}{2}} w(E)^{q/p}+k^{r/2} k^{-\frac{\rho}{2}} w(F),
\label{ex-est-6}
\end{eqnarray}
where in the last inequality we have used \eqref{ex-est-5}. We need to optimize the parameter $\rho$ in order to complete the proof. Let 
$$
\phi_{a, b}(\rho):=a \, k^{\frac{\rho(p-\delta)}{2}}+k^{-\frac{\rho}{2}}\, b
$$
for some positive constants $a$ and $b$. The function $\phi$ attains its absolute minimum at the point $\rho=\frac{2}{p+1-\delta}\log_{k}\left(\frac{b}{a(p-\delta)}\right)$.

Now choosing $a_{0}=k^{\frac{p+\delta}{2}r}w(E)^{q/p}$ and $b_{0}=k^{r/2}w(F)$, we obtain that \eqref{ex-est-6} attains its minimum value if we choose  $\rho=\rho_{0}:=\frac{2}{p+1-\delta}\log_{k}\left(\frac{k^{r/2}w(F)}{k^{\frac{p+\delta}{2}r}w(E)^{q/p}(p-\delta)}\right)$ and the value is given by the following

\begin{eqnarray*}
    \phi_{a_0, b_0}(\rho_0) & = &
    k^{\frac{p+\delta}{2}r}w(E)^\frac{q}{p} k^{\frac{p-\delta}{p+1-\delta}\log_{k}\left(\frac{k^\frac{r}{2}w(F)}{(p-\delta)k^{\frac{p+\delta}{2}r}w(E)^\frac{q}{p}}\right)}+k^\frac{r}{2}w(F) k^{\frac{-1}{p+1-\delta}\log_{k}\left(\frac{k^\frac{r}{2}w(F)}{(p-\delta)k^{\frac{p+\delta}{2}r}w(E)^\frac{q}{p}}\right)}\\
    &=& k^{\left(\frac{p+\delta}{2}-\frac{(p-\delta)(p+\delta-1)}{p+1-\delta}\right)r}\, w(E)^\frac{q}{p}\left(\frac{w(F)}{(p-\delta)w(E)^\frac{q}{p}}\right)^{\frac{p-\delta}{p+1-\delta}}\\
    &&\qquad+\ k^{\frac{r}{2}\left(1-\frac{1-p-\delta}{p+1-\delta}\right)}w(F)\left(\frac{(p-\delta)w(E)^\frac{q}{p}}{w(F)}\right)^{\frac{1}{p+1-\delta}}
\end{eqnarray*}

We conclude that 
\begin{equation}\label{final-est}
    \phi_{a_0, b_0}(\rho_0)\lesssim
    k^{\frac{p}{p-\delta+1}r} w(F)^{\frac{p-\delta}{p+1-\delta}} w(E)^{\frac{q}{p(p+1-\delta)}}.
\end{equation}

For the choice of the $\delta=\frac{1-\alpha p-\alpha p^2}{1-\alpha p}$ and $\frac{1}{q}=\frac{1}{p}-\alpha$, we have
\begin{align*}
 \frac{p-\delta}{p+1-\delta}&=\frac{p-\frac{1-\alpha p-\alpha p^2}{1-\alpha p}}{p+1-\frac{1-\alpha p-\alpha p^2}{1-\alpha p}} \\
 &=\frac{p-1+\alpha p}{p}\\
 &=\frac{1}{p'}+\alpha=1-\frac{1}{q}.
\end{align*}
Similarly $\frac{1}{p-\delta+1}=\frac{1-\alpha p}{p}=\frac{1}{q}$. Combining these facts with \eqref{final-est} we conclude
\begin{eqnarray*}
\sum_{x\in E}w(F\cap S(x, r))
&\leq & C_{p, \alpha} k^{\frac{p}{p-\delta+1}r} w(F)^{\frac{p-\delta}{p+1-\delta}} w(E)^{\frac{q}{p(p+1-\delta)}}\\
&=&C_{p, \alpha}  k^{(1-\alpha p)r} w(E)^{\frac{1}{p}} w(F)^{1-\frac{1}{q}}\\
&= &C_{p, \alpha} k^{(1-\alpha)\epsilon r} w(E)^{\frac{1}{p}} w(F)^{1-\frac{1}{q}},
\end{eqnarray*}
provided we choose $\epsilon=\frac{1-p\alpha}{1-\alpha}\in (0, 1)$. This completes the proof.
\end{proof}

\begin{proof}[Proof of Theorem~\ref{Thm-Ex}]
For $\beta=0$, the weight is the constant weight and the result trivially follows. Hence we assume $0< \beta\leq \frac{p(p-1)}{q}$. We have $1<p<\frac{1}{\alpha}$ and $\frac{1}{q}=\frac{1}{p}-\alpha$. $$w(x)=\sum_{j\geq 0} k^{\beta j}\chi_{\cTj}.$$
It is sufficient to show that $w$ satisfies condition~\eqref{cond-weight-1}. Let $x\in \cTj$ and $|i-j|\leq r$ and $m\in \{0,\cdots, r\}$ be the unique integer such that $i=j+r-2m$. Then
\begin{eqnarray*}
 w(\cTi\cap S(x, r))   
 & \leq & |\cTi\cap S(x, r)| k^{\beta i}\\
& \leq & k^{r-m}k^{(i-j)\frac{\beta q}{p}} k^{i(\beta-\frac{\beta q}{p})} k^{\frac{j\beta q}{p}}\\
 & = & k^{r-m}k^{(r-2m)\frac{\beta q}{p}} k^{i(\beta-\frac{\beta q}{p})} w(x)^{q/p}\\
 & = & k^{(r-m)(1+2 \frac{\beta q}{p})}k^{-r\frac{\beta q}{p}} k^{i(\beta-\frac{\beta q}{p})} w(x)^{q/p}.
\end{eqnarray*}
In order to complete the proof it is enough to show that the following inequality holds
\begin{equation*}
k^{(r-m)(1+2 \frac{\beta q}{p})}k^{-r\frac{\beta q}{p}} k^{i(\beta-\frac{\beta q}{p})}\leq k^{(r-m)(p-\delta)} k^{r\delta}.
\end{equation*}
But this is equivalent to
\begin{equation}\label{cor-1}
(r-m)\left(1+2 \frac{\beta q}{p}-p+\delta\right)\leq r\delta+ r\frac{\beta q}{p}-i(\beta-\frac{\beta q}{p}).
\end{equation}
Since $(\beta-\frac{\beta q}{p})\leq 0$ and we need to prove the above for all $i\geq 0$, thus \eqref{cor-1} will follow if we ensure the following
\begin{align*}
 r\left(1+2 \frac{\beta q}{p}-p+\delta\right)\leq r\delta+ r\frac{\beta q}{p}
 \iff \beta\leq \frac{p(p-1)}{q}.
\end{align*}
This completes the proof of Theorem~\ref{Thm-Ex}.
\end{proof}

\begin{remark}[Two weight estimates]
We would like to highlight that approach is also applicable in the two weight setting. Let $u, v$ be two weights on $\cT$ satisfying the following condition: Let $1<p\leq q<\infty$. There exists $\epsilon\in (0, 1)$ such that for all $E, F\subset \cT$ and $r\in \mathbb{N}_{0}$
\begin{align}
\label{two-weight-cond}
\sum_{x\in E}u(F\cap S(x, r))\leq C \ k^{\epsilon r(1-\alpha)} v(E)^{\frac{1}{p}}u(F)^{1-\frac{1}{q}}.
\end{align}
Then one can prove $\mathcal{S}_{\alpha}$ maps $L^p(v)$ to $L^q(u)$. We just point out that arguing as in Lemma~\ref{main-lemma}, under the condition \eqref{two-weight-cond} the following holds
$$u(\{A_{r, \alpha}f>\lambda\})\leq \frac{1}{(2^\beta-1)^q} \sum\limits_{n\in\mathbb{N}_{0}; 1\leq 2^n\leq k^r}2^{n q}\left(\frac{k^r}{2^n}\right)^{q \beta} \frac{k^{rq\epsilon(1-\alpha)}}{k^{rq}}v\left(\Big\{x\in \cT: \frac{|f| k^{r\alpha} }{\lambda}\geq 2^{n-1}\Big\}\right)^{q/p},$$
for all $\lambda>0,\  r\in \mathbb{N}_{0}$ and $\beta\in (0, 1)$. After this the proof follows the exact arguments as in Theorem~\ref{mainthm} with necessary modifications. It is also possible to provide an analogous condition of \eqref{cond-weight-1} in the two weight setting but for brevity we do not include it here.
\end{remark}

\section*{Acknowledgements}
The first author is supported by the institute postdoctoral fellowship from Tata Institute of Fundamental Research, Centre for Applicable Mathematics.

\bibliographystyle{amsalpha}

\end{document}